\documentclass{amsart}

\usepackage{amsmath, amssymb, amsthm, mathtools}
\usepackage{mathrsfs} 

\usepackage{multicol}

\usepackage{soul}

\usepackage{graphicx}
\usepackage{color}
\usepackage{xcolor}
\usepackage{mathtools}
\usepackage{amssymb, amsmath, amsthm}
\usepackage{mathrsfs}
\usepackage{tabularx}
\usepackage{comment}
\usepackage[percent]{overpic}

\usepackage{tikz}
\usepackage{tikz-3dplot}

\usepackage{array}

\newcommand\rowincludegraphics[2][]{\raisebox{-0.45\height}{\includegraphics[#1]{#2}}}

\graphicspath{ {./} }

\newcommand{\R}{\mathbb{R}}

\newcommand{\GL}{\mathrm{GL}}
\newcommand{\OO}{\mathrm{O}}

\newcommand{\Sc}{\mathcal{S}}
\newcommand{\Xc}{\mathcal{X}}

\newcommand{\Dc}{\mathcal{D}}
\newcommand{\Lc}{\mathcal{L}}

\newcommand{\ve}{\textbf{{e}}}

\newcommand{\defi}[1]{\textsl{#1}}
\newcommand{\im}{\mathrm{Im}}
\newcommand{\PP}{\mathbb{P}}
\newcommand{\AAA}{\mathbb{A}}

\newcommand{\ga}[1]{ {\color{purple} #1}}
\newcommand{\auel}[1]{ {\color{red} #1}}

\usepackage{subcaption}

\usepackage{float}

\newtheorem{theorem}{Theorem}[section]
\newtheorem{lemma}[theorem]{Lemma}

\newtheorem{proposition}[theorem]{Proposition}
\newtheorem*{theorem*}{Theorem}

\theoremstyle{definition}
\newtheorem{definition}[theorem]{Definition}
\newtheorem{example}[theorem]{Example}

\theoremstyle{remark}
\newtheorem{remark}[theorem]{Remark}

\title[Geometry of symmetric star transforms]{Symmetric star transforms and the algebraic geometry of their dual differential operators}

\author{Gaik Ambartsoumian}
\address{Department of Mathematics, University of Texas at Arlington}
\email{gambarts@uta.edu}

\author{Asher Auel}
\address{Department of Mathematics, Dartmouth College}
\email{asher.auel@dartmouth.edu}

\author{Mohammad Javad Latifi Jebelli}
\address{Department of Mathematics, Dartmouth College}
\email{mohammad.javad.latifi.jebelli@dartmouth.edu}

\date{\today}

\usepackage{hyperref}
\hypersetup{pdftitle={Symmetric star transforms and the algebraic geometry of their dual differential operators}}
\hypersetup{pdfauthor={Gaik Ambartsoumian, Asher Auel, and Mohammad Javad Latifi Jebelli}}
\hypersetup{pagebackref,colorlinks=true,linkcolor=blue,anchorcolor=blue,citecolor=blue}

\begin{document}

\usetikzlibrary{shapes.geometric, arrows}

\tikzstyle{box} = [rectangle, rounded corners, draw=black, fill=white!20, text centered, minimum height=1cm, minimum width=2.5cm]
\tikzstyle{arrow} = [thick,->,>=stealth]

\begin{abstract}
The star transform is a generalized Radon transform mapping a function on \(\R^n\) to the function whose value at a point  
is the integral along a union of rays emanating from the point in a fixed set of directions, called branch vectors.  
We show that the injectivity and inversion properties of the star transform are connected to its dual differential operator, an object introduced in this paper. We prove that if the set of branch vectors forms a symmetric shape with respect to the action of a finite rotation group \(G\), then the symbol of its dual differential operator belongs to the ring of \(G\)-invariant polynomials. Furthermore, we show that star transforms with degenerate symmetry correspond to linear subspaces contained in the zero set of certain elementary symmetric polynomials, and we investigate the associated real algebraic Fano varieties.  In particular, non-invertible star transforms in dimension 2 correspond to certain real lines on the Cayley nodal cubic surface.
\end{abstract}

\maketitle

\section*{Introduction}

Let $f \in C_c^{\infty}(\R^n)$ be a compactly supported smooth function on $\R^n$. The divergent beam transform $\Xc_u$ in the direction $u\in \R^n\setminus \{0\}$ is defined as the weighted integral
\begin{equation}
\label{eq:beam}
     (\Xc_u f) (x) \coloneq \int\limits_{-\infty}^{0} f (x+tu) \,dt,
\end{equation}
of $f$ along the ray emanating from  $x \in \R^n$ in the direction $-u$ with the weight $1/||u||$. The fundamental theorem of calculus implies that $\Xc_u$ is the two-sided inverse of the directional derivative operator $\Dc_u$ acting on the space of compactly supported smooth functions.

The divergent beam transform serves as a building block for the star transform studied in this paper. A simple version of the star transform in $\R^2$ was originally investigated in relation to tomographic applications utilizing particle scattering (see 
\cite{amb-book}, \cite{Amb_Lat_star}, \cite{ZSM-star-14}). Here we define a more general notion of the star transform, using elementary symmetric polynomials in the operators $\Xc_u$ (see Definition \ref{def:star} and  Section \ref{sec:domain}).
For instance, let $\{u_1, \dots, u_m\}$ be a set of fixed (branch) vectors in $\R^n\setminus\{0\}$, and $e_1$ be the first elementary symmetric polynomial in $m$ variables. Then, the corresponding star transform can be expressed as 
$$
\Sc \coloneq e_1(\Xc_{u_1}, \dots, \Xc_{u_m}) = \Xc_{u_1} + \dots + \Xc_{u_m}.$$ 
We show that the injectivity and inversion properties of the star transform $\Sc$ are connected to its dual differential operator $\Lc$, introduced in Section \ref{Sec:Dual}, which is a differential operator with constant coefficients obtained by applying certain directional derivatives to $\Sc$. In the example above, the dual differential operator associated with $\Sc$ is $\Lc = \Dc_{u_1}\dots \Dc_{u_m} \Sc$, see Definition \ref{def:dual_diff_operator}. 

The relation between a star transform $\Sc$ and its dual differential operator $\Lc$ can be utilized in both directions. In other words, some knowledge about one of them can provide an insight into the features of the other. The results of Section \ref{Sec:Dual} show that one can use PDE techniques to identify whether a given star transform $\Sc$ is injective or not, derive inversions for injective setups, and classify them according to their stability.

Another set of interesting statements is obtained from consideration of symmetric setups of the star transform. To give a simple outline of these results, let us consider the example of Platonic solids in 3D (see Table \ref{tab:Platonic}). Each of these  convex regular polyhedra has an associated symmetry group \( G \). By centering a Platonic solid at the origin, one can interpret its vertices as the branches of a star transform, which inherits the associated symmetry group. 
This symmetry affects the injectivity properties of the star transform, leading to novel results in invariant theory and real algebraic geometry. For example, the vertices of a cube occur in pairs of opposite sign (i.e., \( u \) and \( -u \)), whereas this property does not hold for the vertices of a tetrahedron. As we will demonstrate, the symmetry group of a cube induces a non-invertible star transform, while the symmetry group of a tetrahedron leads to an invertible one. The diagram below provides an outline of the results in this paper stemming from star transforms with symmetry.

\begin{center}
    \vspace{0.5cm}
\begin{tikzpicture}[node distance=1.5cm]

\node (top) [box] {Star transforms with symmetry};
\node (left) [box, below left of=top, xshift=-2.5cm, yshift=-1cm] {Invariant theory};
\node (right) [box, below right of=top, xshift=2.5cm, yshift=-1cm] {Fano scheme of $\{ e_{k}(x) = 0\} $};

\draw [arrow] (top) -- node[midway, above, xshift=-1cm, yshift=-0.2cm] {Invertible} (left);
\draw [arrow] (top) -- node[midway, above,xshift=1.5cm,  yshift=-0.2cm] {Non-invertible} (right);

\end{tikzpicture}
\vspace{0.5cm}
\end{center}

In Section \ref{sec:invariant_theory}, we prove the following, see Theorem \ref{thm:star-symmetry} for the precise statement. 

\begin{theorem*} If the set of the branch vectors of a star $\Sc$ forms a symmetric shape with respect to a group \(G\), then the symbol of $\Lc$ belongs to the ring of \(G\)-invariant polynomials.
\end{theorem*}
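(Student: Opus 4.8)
The plan is to make the symbol of $\Lc$ completely explicit, and then to observe that $G$-invariance is a formal consequence of the symmetry of elementary symmetric polynomials together with the orthogonality of $G$. Write $\Sc = e_d(\Xc_{u_1},\dots,\Xc_{u_m})$ for the elementary symmetric polynomial defining the star. Since $\Xc_{u_j}$ is a two-sided inverse of $\Dc_{u_j}$ on $C_c^{\infty}(\R^n)$, and the constant-coefficient operators $\Dc_{u_1},\dots,\Dc_{u_m}$ commute with one another, multiplying the expansion $e_d(\Xc_{u_1},\dots,\Xc_{u_m}) = \sum_{|S|=d}\prod_{j\in S}\Dc_{u_j}^{-1}$ by $\Dc_{u_1}\cdots\Dc_{u_m}$ cancels, in each term, exactly the factors indexed by $S$. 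Hence
\begin{equation*}
\Lc = \Dc_{u_1}\cdots\Dc_{u_m}\,\Sc = \sum_{|S|=d}\ \prod_{k\notin S}\Dc_{u_k} = e_{m-d}(\Dc_{u_1},\dots,\Dc_{u_m}).
\end{equation*}
Because the symbol of the directional derivative $\Dc_{u}$ is a constant multiple of the linear form $\xi\mapsto\langle u,\xi\rangle$ (with the standard convention, $i\langle u,\xi\rangle$), the symbol of $\Lc$ is, up to the global constant $i^{m-d}$,
\begin{equation*}
\sigma_\Lc(\xi) = e_{m-d}\bigl(\langle u_1,\xi\rangle,\dots,\langle u_m,\xi\rangle\bigr),
\end{equation*}
a homogeneous polynomial of degree $m-d$ in $\xi\in\R^n$.

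Next I would fix the $G$-action. A finite rotation group $G\subset\OO(n)$ acts on polynomials in $\xi$ by $(g\cdot p)(\xi) = p(g^{-1}\xi)$, and $G$-invariance of $\sigma_\Lc$ means $\sigma_\Lc(g^{-1}\xi) = \sigma_\Lc(\xi)$ for all $g\in G$. The single computation that does the work is the orthogonality relation: for $g\in G\subset\OO(n)$ one has $g^{-1}=g^{T}$, so
\begin{equation*}
\langle u_j, g^{-1}\xi\rangle = \langle g u_j,\xi\rangle \qquad\text{for every } j.
\end{equation*}
Thus applying $g$ to the symbol replaces each linear form $\langle u_j,\,\cdot\,\rangle$ by $\langle g u_j,\,\cdot\,\rangle$; that is, it permutes the family of linear forms in exactly the way $g$ permutes the branch vectors.

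The hypothesis that the branch vectors form a symmetric shape with respect to $G$ means precisely that $G$ preserves the set $\{u_1,\dots,u_m\}$, so each $g\in G$ induces a permutation $\pi_g$ of the index set with $g u_j = u_{\pi_g(j)}$. Feeding this into the previous identity,
\begin{equation*}
\sigma_\Lc(g^{-1}\xi) = e_{m-d}\bigl(\langle u_{\pi_g(1)},\xi\rangle,\dots,\langle u_{\pi_g(m)},\xi\rangle\bigr) = e_{m-d}\bigl(\langle u_1,\xi\rangle,\dots,\langle u_m,\xi\rangle\bigr) = \sigma_\Lc(\xi),
\end{equation*}
where the middle equality holds because $e_{m-d}$ is a symmetric polynomial and hence invariant under permuting its arguments. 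Therefore $\sigma_\Lc$ lies in the ring $\R[\xi]^{G}$ of $G$-invariant polynomials.

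I expect the only real obstacle to be the first step: pinning down the clean operator identity $\Lc = e_{m-d}(\Dc_{u_1},\dots,\Dc_{u_m})$ in the general elementary-symmetric case, which requires tracking which derivative factors survive the cancellation against the inverses $\Dc_{u_j}^{-1}$ and confirming these manipulations are valid on the domain set up in Section \ref{sec:domain}. Once the symbol is identified with $e_{m-d}$ of the linear forms $\langle u_j,\xi\rangle$, the invariance is formal; the normalizing constant $i^{m-d}$ and the distinction between $g$ and $g^{-1}$ are immaterial to membership in $\R[\xi]^{G}$.
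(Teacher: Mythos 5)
Your proof is correct and is essentially the paper's own argument: the core computation --- orthogonality of $g$ turning the action on $\xi$ into a permutation of the branch vectors, which the symmetric polynomial $e_{m-d}$ absorbs --- is exactly the paper's chain $\sigma(g\xi) = p^*(Ug\xi) = p^*(\alpha_g U\xi) = p^*(U\xi)$. The only difference is that your first step (the operator cancellation giving $\Lc = e_{m-d}(\Dc_{u_1},\dots,\Dc_{u_m})$), which you flag as the main obstacle, is unnecessary in the paper's framework: $\Lc$ is \emph{defined} as the operator with total symbol $p^*(U\xi)$, and the reciprocal polynomial of $e_d$ is $e_{m-d}$, which also lets the paper's proof cover an arbitrary symmetric polynomial symbol $p$ rather than only elementary ones.
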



As an application of the latter result, we obtain a set of elegant formulas for powers of the Laplace operator $\Delta$ in $\R^n$, expressed in terms of finite sums of iterated directional derivatives. For example, let $\{u_1, u_2, u_3\}\subset\R^2$ denote the set of radius vectors of the vertices of an equilateral triangle centered at the origin. Then
$$
e_2(\Dc_{u_1},\Dc_{u_2},\Dc_{u_3})=\Dc_{u_1}\Dc_{u_2}+\Dc_{u_1}\Dc_{u_3}+\Dc_{u_2}\Dc_{u_3}=C \Delta,
$$
where $C$ is some non-zero constant. For similar, but more general statements in $\R^2$ and $\R^3$ see formula \eqref{eq:e_r-2D} and Table \ref{tab:Platonic}.

Theorem \ref{thm:star-symmetry} provides an interesting connection to invariant theory, which is explored in Section \ref{sec:inv-pol}. In particular, for a finite group $G$ represented in $\R^n$, one can construct invariant polynomials in $\R[\xi_1, \dots, \xi_n]^{G}$ using the symbols of differential operators $\Lc$, which are dual to appropriately chosen star transforms $\Sc$. This leads to an open question, whether the construction technique described above can produce all generators of the invariant ring.

In Section \ref{sec:non-invertible}, we explore non-invertible star transforms, for which the corresponding dual differential operator vanishes, i.e. $\Lc\equiv 0$. Interestingly, such a degeneracy can be predicted using the injectivity properties of the ray transform (which in $\R^2$ is equivalent to the standard Radon transform). In Theorem~\ref{thm:degenerate_star}, we prove that the non-invertible star transforms are associated with real points of certain algebraic varieties known as Fano schemes; in Proposition~\ref{cor:fano} and Theorem~\ref{thm:degenerate_star} we determine parts of the geometry of these varieties and their connection with non-invertible star transforms.  For example, we show that the three smooth lines in Cayley's nodal cubic surface correspond to the three non-invertible star  transforms on $\R^2$ with four branch vectors. 

We finish the paper with some additional remarks, discussion of future work and open problems listed in Section \ref{sec:remarks}.

\section{Star transforms}

For a (nonzero) direction vector $u$ in $\R^n$, the divergent beam
transform $\Xc_u$ is defined by the formula \eqref{eq:beam}. 
Considering $\Xc_u$ as a linear map $C_{c}^{\infty}(\R^n)$ to $C^{\infty}(\R^n)$, we have $\Xc_u \Dc_u f = \Dc_u \Xc_u f = f$ for any $f\in C_c^{\infty}(\R^n)$ using fundamental theorem of calculus.

Note that divergent beam transforms on $\R^n$ commute, i.e., that
$\Xc_u \Xc_v = \Xc_v \Xc_u$ for any direction vectors $u$ and $v$.  We
also note that for all $a > 0$, we have $\Xc_{au} = \frac{1}{a}\Xc_u$.
Hence up to a positive scalar multiple, we can choose $u \in S^{n-1}$
on the unit sphere in $\R^n$.  However, for $n \geq 2$, in contrast to
the directional derivative, the divergent beam transform does not
enjoy other linearity properties in the direction vector.  We are thus
led to the following.

\begin{definition}
The \defi{algebra of formal star transforms} on $\R^n$ is the free commutative $\R$-algebra generated by symbols $\Xc_u$ for $u \in S^{n-1}$.
\end{definition}

A formal star transform $\Sc$ is determined by a list of \defi{branch vectors} $u_1, \dots, u_m$ and a polynomial $p \in \R[x_1, \dotsc, x_m]$, where $\Sc = p(\Xc_{u_1}, \dots, \Xc_{u_m})$. We refer to $p$ as the \defi{polynomial symbol} of $\Sc$.  In the standard basis on $\R^n$, the branch vectors determine the rows of an $m\times n$ \defi{branch matrix} $U$, and we refer to $(p,U)$ as the \defi{(total) symbol} of the star transform $\Sc$.  For visualization purposes, we often identify the set of branch vectors with the vertices of a polytope in $\R^n$.  For convenience, we do not necessarily assume that the branch vectors are on the unit sphere.

In this article, we focus on star transforms whose polynomial symbols are elementary symmetric polynomials.
 

\begin{definition}\label{def:star}
A formal star transform on $\R^n$ has \defi{order} $d$ if its polynomial symbol is homogeneous of degree $d$.  An \defi{elementary star transform} of order $d$ on $\R^n$ is any formal star transform with whose polynomial symbol is the elementary symmetric polynomial $e_d$ of degree $d$. 
\end{definition}

\begin{example}
An elementary star transform of order $1$ on $\R^2$ with branch
vectors $u, v$ is $\Sc =  \Xc_u + \Xc_v$.  Viewing $\Sc :
C_{c}^{\infty}(\R^n) \to C^{\infty}(\R^n)$, then for a
compactly supported function $f$ on $\R^2$, we have that $(\Sc f) (x)$
is the integral of $f$ along the \textsf{V}-shaped trajectory
emanating from the vertex $x$ in directions $-u$ and $-v$. This
special case of the star transform with two branches is also known as
a \textsf{V}-line or broken ray transform, cf.\ \cite{amb-lat_2019, Florescu-Markel-Schotland, Gouia_Amb_V-line, Kats_Krylov-13, walker2019broken}.
\end{example}


Generalizing this example, any formal star transform of order $1$ on
$\R^n$ determines a linear map from $C_c^{\infty}(\R^n)$ to
$C^{\infty}(\R^n)$.  However, the integrals defining star
transforms of higher order may diverge on a subset of $\R^n$ of
positive measure, see Section~\ref{sec:domain} for more details.  In the absence of such divergence, we call a star transforms \emph{realizable on $\R^n$}. The star transforms of order 1 are realizable on $\R^n$, and the discussion in Section ~\ref{sec:domain} shows that all star transforms can become realizable when restricted to a bounded region in $\R^n$.


\begin{remark}
When the polynomial symbol of a star transform $\Sc$ is symmetric, it
implies that $\Sc$ only depends on the geometric properties of the set
of branch vectors $\{u_1, \dots, u_m \}$ and not on a specific
ordering of the vectors. In particular, elementary star transforms of order 1 have remarkable features, including exact closed-form inversion formulas \cite{Amb_Lat_star}.
\end{remark}

\section{The dual differential operator}\label{Sec:Dual}

We consider differential operators on $\R^n$ with constant
coefficients.  The association of such a differential operator
$$
\Lc = \sum_{|I|\leq d} c_I \frac{\partial}{\partial x^I}
$$
with its \emph{total symbol}
$$
p_\Lc(\xi) = \sum_{|I| \leq d} c_I \xi^I
$$
determines an isomorphism between the $\R$-algebra of differential
operators on $\R^n$ with constant coefficient and the polynomial
algebra $\R[\xi_1,\dotsc,\xi_n]$.  Here, we take the usual multi-index
conventions, where if $I=(i_1, \dots, i_n)$ is a multi-index, then
$\frac{\partial^{|I|}}{\partial x^I} = \frac{\partial^{i_1 + \dots
i_n}}{\partial x_1^{i_1} \dots \, \partial x_n^{i_n}}$ and $\xi^I =
\xi_1^{i_1} \dots \xi_n^{i_n}$. Note that the symbol associated with a
directional derivative $\Dc_u$ for $u\in \R^n$ is the linear
polynomial $p_{\Dc_u}(\xi) = u\cdot \xi$. 


\begin{definition}
For a polynomial $p \in \R[x_1,\dotsc,x_m]$ define its \defi{reciprocal polynomial} as
\begin{equation}
      p^*(x_1, \dots, x_m) \coloneq x_1^{d_1}\dots x_m^{d_m} p(x_1^{-1}, \dots, x_m^{-1}),  
\end{equation}
where $d_j = \operatorname{deg}_j(p)$ is the degree of $p$ in variable $x_j$.
\end{definition}
\begin{definition}\label{def:dual_diff_operator}
Let $\Sc$ be a star transform with a symbol $(p,U)$. The \defi{dual
differential operator $\Lc$ associated with $\Sc$} is the differential
operator with total symbol $p_{\Lc}(\xi)= p^*(U\xi)$, where $\xi = (\xi_1, \dots, \xi_n)^T$.
\end{definition}

  More explicitly, if
\begin{equation}
    \Sc = p(\Xc_{u_1}, \dots, \Xc_{u_m}),
\end{equation}
then
\begin{equation}
    \Lc = p^*(\Dc_{u_1}, \dots, \Dc_{u_m}).
\end{equation}


\begin{theorem}
\label{thm:stardiffinv}
Let $\Sc$ be a nonzero star transform that is realizable over $\R^n$.  When considered as an operator from $C_c^{\infty}(\R^n)$ to $C^{\infty}(\R^n)$, $\Sc$ is injective if and only if the dual differential operator $\Lc$ is non-zero. 
\end{theorem}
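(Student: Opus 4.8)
The plan is to compare $\Sc$ and its dual $\Lc$ through the single relation $\Dc_{u_j}\Xc_{u_j} = \mathrm{id}$ on $C_c^\infty(\R^n)$ together with the commutativity of all the $\Xc$'s and $\Dc$'s. Writing the polynomial symbol as $p = \sum_\alpha c_\alpha x^\alpha$ with $\alpha = (\alpha_1,\dots,\alpha_m)$ and $d_j = \deg_j p$, the reciprocal polynomial gives $\Lc = p^*(\Dc_{u_1},\dots,\Dc_{u_m}) = \sum_\alpha c_\alpha \prod_j \Dc_{u_j}^{\,d_j-\alpha_j}$. Setting $D = \prod_j \Dc_{u_j}^{\,d_j}$, an honest constant-coefficient differential operator, and collapsing the cancellations $\Dc_{u_j}^{\,d_j}\Xc_{u_j}^{\alpha_j} = \Dc_{u_j}^{\,d_j-\alpha_j}$, I would record the operator identity
\begin{equation}
\Lc = D\,\Sc \qquad \text{on } C_c^\infty(\R^n),
\end{equation}
valid because $\Sc$ is realizable, so the defining integrals converge and one may differentiate under them. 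More generally, precomposing with $\prod_j\Dc_{u_j}^{\,N}$ for $N \ge \max_j d_j$ and using $\Xc_{u_j}^{\alpha_j}\Dc_{u_j}^{\,N} = \Dc_{u_j}^{\,N-\alpha_j}$ yields
\begin{equation}\label{eq:plan-other}
\Sc\,\prod_j \Dc_{u_j}^{\,N} = \Big(\prod_j \Dc_{u_j}^{\,N-d_j}\Big)\,\Lc .
\end{equation}

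Next I would isolate the only analytic input, namely that a nonzero constant-coefficient differential operator $P(\partial)$ is injective on $C_c^\infty(\R^n)$: if $P(\partial)f = 0$ then $P(i\xi)\hat f(\xi) = 0$, and since $P(i\xi)$ is a nonzero polynomial it is nonzero on a dense set, on which $\hat f$ must vanish, forcing $\hat f \equiv 0$ by continuity of the Fourier transform and hence $f = 0$. With this in hand, the implication $\Lc \neq 0 \Rightarrow \Sc$ injective is immediate from $\Lc = D\Sc$: if $\Sc f = 0$ then $\Lc f = D(\Sc f) = 0$, so $\ker\Sc \subseteq \ker\Lc = 0$.

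For the converse I would argue contrapositively and exhibit an explicit kernel. If $\Lc = 0$, then for any nonzero $g \in C_c^\infty(\R^n)$ set $f = \prod_j \Dc_{u_j}^{\,N}g$ with $N = \max_j d_j$. The function $f$ is nonzero because $\prod_j \Dc_{u_j}^{\,N}$ is a nonzero constant-coefficient operator, hence injective by the previous paragraph, while \eqref{eq:plan-other} gives $\Sc f = \big(\prod_j \Dc_{u_j}^{\,N-d_j}\big)\Lc g = 0$. Thus $\Sc$ is not injective, completing both directions.

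The main obstacle I anticipate is precisely the reason I route the converse through \eqref{eq:plan-other} rather than through a formal inverse. One is tempted to cancel $D$ in $\Lc = D\Sc$ using $X = \prod_j \Xc_{u_j}^{\,d_j}$, which would formally give $\Sc = X\Lc$ and hence $\ker\Lc \subseteq \ker\Sc$; but $X$ is typically not realizable, since its defining integrals diverge, for instance when branch vectors occur in antipodal pairs $u,-u$, so this identity fails as an identity of operators and cannot be used. Applying the extra derivatives $\prod_j \Dc_{u_j}^{\,N}$ first regularizes this would-be inverse and produces genuine compactly supported kernel elements. The remaining care is to justify the operator identities $\Lc = D\Sc$ and \eqref{eq:plan-other} rigorously on $C_c^\infty(\R^n)$ for realizable $\Sc$, i.e.\ that passing the directional derivatives through the convergent beam integrals is legitimate; this is exactly where the realizability hypothesis enters.
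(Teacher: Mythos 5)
Your proposal is correct and takes essentially the same route as the paper: both directions rest on the operator identities $\Lc = D\,\Sc$ and $\Sc D = \Lc$ (with $D$ a product of powers of the $\Dc_{u_j}$) derived from $\Dc_u \Xc_u = \Xc_u \Dc_u = I$ on $C_c^{\infty}(\R^n)$, combined with the Fourier-transform fact that a nonzero constant-coefficient differential operator is injective on $C_c^{\infty}(\R^n)$. The only cosmetic differences are that the paper invokes the Paley--Wiener--Schwartz theorem where, as you note, continuity of $\hat{f}$ already suffices, and that it phrases the converse as a contradiction with injectivity rather than exhibiting the explicit kernel elements $\prod_j \Dc_{u_j}^{N} g$.
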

\begin{proof} 
Let $\Sc = p(\Xc_{u_1}, \dotsc, \Xc_{u_m})$ be a star transform.  Assuming that $\Sc$ is realizable and not injective, there exists a non-zero function $f \in C_c^{\infty}(\R^n)$ such that $\Sc f = 0$. 
Since $\Dc_{u} \Xc_{u} = I$ for any $u \in \R^n$, we have that
\[
\Lc = \Dc_{u_1}^{d_1} \dots \Dc_{u_m}^{d_m} \; \Sc 
\]
where $d_j$ is the degree of $x_j$ in $p$.  Hence $\Sc f = 0$ implies that $\Lc f = 0$.  

From this, we will deduce that $\Lc =0$. Consider the PDE $\Lc f = 0$ with $f$ a nonzero compactly supported smooth function. Taking the Fourier transform we have
\[
\widehat{\Lc f} = p_{\Lc}(\xi) \cdot \widehat{f}(\xi) = 0.
\]
If the polynomial $p_{\Lc}(\xi)$ is not identically zero then it only vanishes on a lower dimensional algebraic subset of $\R^n$. On the other hand, the Paley--Wiener--Schwartz theorem implies that the Fourier transform $\tilde{f}(\xi)$ extends to a (non-zero) holomorphic function, hence cannot have a dense set of zeros. Thus we conclude that $p_{\Lc}(\xi)$ is uniformly zero, so that $\Lc = 0$. \\ 
Now, consider the case where $\Sc$ is injective, i.e. $\Sc f = 0$ implies $f=0$. If we assume by contradiction that $\Lc=0$ then  for an arbitrary $f\in C_c^{\infty}(\R^n)$
$$
\Sc \Dc_{u_1}^{d_1} \dots \Dc_{u_m}^{d_m} f = \Lc f = 0
$$
which by injectivity of $\Sc$ implies that $\Dc_{u_1}^{d_1} \dots \Dc_{u_m}^{d_m} f = 0$. But, this was for an arbitrary $f$ leading to a contradiction.  
\end{proof}

\begin{example}
    Let $p(y) = e_{1}(y)$, where $y=(y_1, \dots, y_m)$ and $m$ is an even number.  Assume also that $\{u_1,\ldots,u_m\}$ is a set of pairwise opposite vectors. It was shown in \cite{Amb_Lat_star} that the star transform $\Sc$ with symbol $(p,U)$ is not injective. Therefore, by Theorem \ref{thm:stardiffinv}, its dual differential operator of $\Lc$ must be identically zero, i.e.  $\sigma_{\Lc} (\xi) = e_{m-1}(U\xi) \equiv 0$. See Section \ref{sec:non-invertible} for further results in this direction.
\end{example}

\begin{remark}
    A relevant question is to identify all star transforms that correspond to elliptic or hyperbolic differential operators $\Lc$. It is evident from the observations in \cite{Amb_Lat_star} that the star transforms of order 1 in $\R^2$, which correspond to hyperbolic operators, have unstable inversions. Meanwhile, the ellipticity  condition $e_{m-1}(U\xi)\ne0$ implies the absence of Type 2 singularities (see formula (13) in \cite{Amb_Lat_star}), or equivalently, the stable inversion of $\Sc$. 
\end{remark}

\section{Invariant theory and star symmetries}\label{sec:invariant_theory}

In this section, we study the star transform and its dual differential operator in a setup where the branch vectors have symmetries associated with a group $G$. We start with some particular examples to motivate the general statements that follow. 

Let us consider a few star transforms in $\R^2$, with branches corresponding to the radius vectors of the vertices of regular $m$-gons (see Figure \ref{Fig:polygons}). As discussed before, those vectors make up the rows of the branch matrix $U$, and the star transforms we consider have symbols of the form $(e_k, U)$.

\begin{figure}[ht]
\begin{center}
\includegraphics[height=1.9cm]{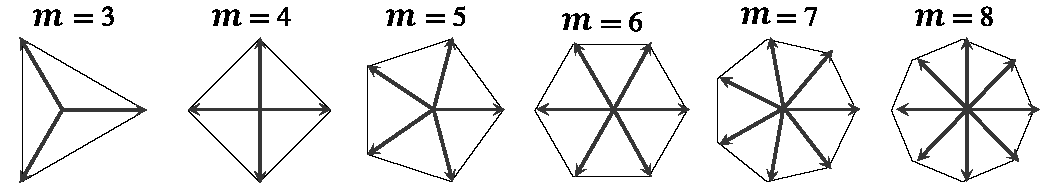} 
 \caption{Branch vectors of star transforms, corresponding to radius vectors of vertices of regular polygons in $\R^2$. The coordinates of red vectors represent rows of the branch matrix $U$.}\label{Fig:polygons}
\end{center}
\end{figure}

In the case of regular polygons in 2D, symbolic calculations suggested that the dual differential operator is either a power of Laplacian or zero, as we prove later in this article. Namely,
\begin{equation}\label{eq:e_r-2D}
    \Lc = e_r(\Dc_{u_1}, \dots, \Dc_{u_m}) = \begin{cases}
        0, & r=2j+1,\\
        C \Delta^j,& r=2j,
    \end{cases}
\end{equation}
where $C$ is some non-zero constant.
In other words, the  star transform with symbol $(e_{m-r}, U)$ corresponds to the dual symbol given by

\begin{equation}
    \sigma_{\Lc}(\xi) = e_{r}(U\xi)=
    \begin{cases}
        0, & r=2j+1,\\
        C (\xi_1^2+\xi_2^2)^j,& r=2j.
    \end{cases}
\end{equation}
As an example, the reader can check this formula by choosing vertices of an equilateral triangle as branch vectors (rows of $3\times 2$ matrix $U$) and computing the symbol $e_2(U\xi)$. The above statement is not trivial since, although $\xi_1^2+\xi_2^2$ is the generator of the ring of $O(2,\R)$-invariant polynomials, in all of these cases the group of symmetries  is a finite subgroup of $O(2,\R)$. Later, we clarify this observation in a more general setting. \\ 

Next, let us look at a few examples of the star transform in $\R^3$ with branches corresponding to the radius vectors of the Platonic solids. We notice that in many cases, the dual differential operator associated with such a symmetric star transform is either zero or a power of the 3-dimensional Laplace operator (see Table \ref{tab:Platonic}).

\begin{table}[ht]
\begin{tabular}{p{10.3cm}|p{2.2cm}}
\textbf{The star transforms associated with the Platonic solids }  &   \\ \hline
       \vspace{-8mm}
        
$     
\begin{matrix*}[l]
\textbf{Tetrahedron} & & \\ 
(m=4) & (e_3, U) \Rightarrow & \Lc = e_1(\Dc_{u_1}, \dots, \Dc_{u_4}) = 0 \\ 
     & (e_2, U) \Rightarrow & \Lc = e_2(\Dc_{u_1}, \dots, \Dc_{u_4}) = C\Delta \\ 
\end{matrix*}
$

& \rowincludegraphics[scale=0.67]{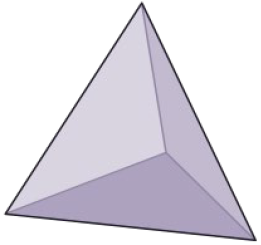} \\ \hline
       \vspace{-8mm}

$     
\begin{matrix*}[l]
\textbf{Octahedron} & & \\ (m=6) & \quad\; (e_4, U) \Rightarrow & \Lc = e_2(\Dc_{u_1}, \dots, \Dc_{u_6}) = C\Delta   \\ 
     & (e_{2k-1}, U) \Rightarrow & \Lc= e_{m-2k+1}(\Dc_{u_1}, \dots, \Dc_{u_{6}}) = 0 \\ 
\end{matrix*}
$

& \rowincludegraphics[scale=0.67]{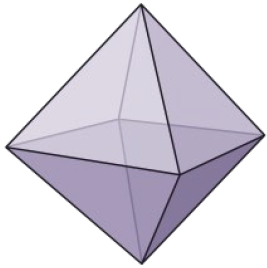} \\ \hline     
       \vspace{-8mm}

$     
\begin{matrix*}[l]
\textbf{Cube     } & & \\ (m=8) & \quad \quad \quad\; (e_6, U) \Rightarrow & \Lc = e_2(\Dc_{u_1}, \dots, \Dc_{u_8}) = C\Delta   \\ 
     & \quad \quad (e_{2k-1}, U) \Rightarrow & \Lc = e_{m-2k+1}(\Dc_{u_1}, \dots, \Dc_{u_{8}}) = 0 \\ 
\end{matrix*}
$
       
& \rowincludegraphics[scale=0.67]{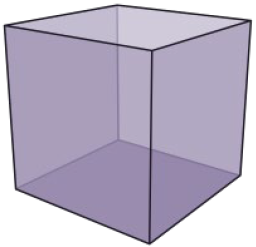} \\ \hline
        \vspace{-11mm}

$     
\begin{matrix*}[l]
\textbf{Icosahedron} & & \\
(m=12) & \quad (e_{10}, U) \Rightarrow & \Lc = e_2(\Dc_{u_1}, \dots, \Dc_{u_{12}}) = C\Delta   \\  & \quad \; \; (e_8, U) \Rightarrow & \Lc = e_4(\Dc_{u_1}, \dots, \Dc_{u_{12}})= C\Delta^2   \\ 
     & (e_{2k-1}, U) \Rightarrow & \Lc = e_{m-2k+1}(\Dc_{u_1}, \dots, \Dc_{u_{12}}) = 0 \\ 
\end{matrix*}
$

& \rowincludegraphics[scale=0.67]{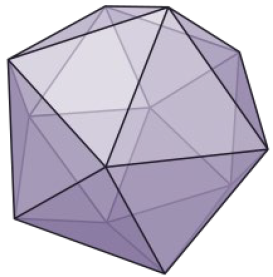} \\ \hline       
       \vspace{-11mm}

$     
\begin{matrix*}[l]
\textbf{Dodecahedron} & & \\
(m=20) & \quad (e_{18}, U) \Rightarrow & \Lc = e_{2}(\Dc_{u_1}, \dots, \Dc_{u_{20}}) = C\Delta   \\   & \quad\; (e_{16}, U) \Rightarrow & \Lc = e_4(\Dc_{u_1}, \dots, \Dc_{u_{20}}) = C\Delta^2   \\ 
     & (e_{2k-1}, U) \Rightarrow & \Lc = e_{m-2k+1}(\Dc_{u_1}, \dots, \Dc_{u_{20}}) = 0 \\ 
\end{matrix*}
$

& \rowincludegraphics[scale=0.67]{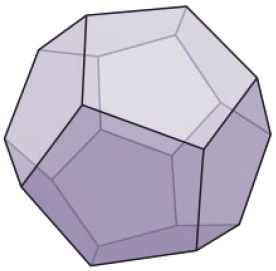} \\ \hline
\end{tabular}
\caption{ If the rows of $U$ are the branch vectors of the star transform $\Sc$ with symbol $(e_k, U)$, then the dual differential operator has the symbol $e_{m-k}(U\xi)$. The table demonstrates the dual differential operators of the star transforms associated with the Platonic solids in 3D. Images from Wikimedia.}  
\label{tab:Platonic}
\end{table}

\begin{definition}
For a group $G\subset \OO_n(\R)$, we call a star transform  
\defi{$G$-symmetric} 
if the branch vectors of $\Sc$ are invariant under the action of $G$. In other words,  for any $g\in G$ we have $Ug = \alpha_g U$, where $\alpha_g$ is a permutation matrix. 
\end{definition}

We are now ready to formulate and prove a general result, which (among other things) implies the statements about the special cases discussed above.

\begin{theorem}\label{thm:star-symmetry}
    Let $\Sc$ be a star transform with symbol $(p,U)$. If $\Sc$ is $G$-Symmetric for some $G\subset \OO_n(\R)$, then the symbol of the dual differential operator  $\Lc$ belongs to $\R[\xi_1, \dots, \xi_n]^{G}$, the ring of $G$-invariant polynomials. In particular, $\Lc$ is $G$-invariant.
\end{theorem}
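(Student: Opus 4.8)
The plan is to check $G$-invariance at the level of the total symbol $p_\Lc(\xi) = p^*(U\xi)$ and then read off invariance of the operator $\Lc$ itself. The whole argument rests on a single structural point: because the polynomial symbol $p$ is symmetric (as it is for the elementary star transforms of interest), the linear $G$-symmetry of the branch configuration can be converted into a coordinate permutation of the vector $U\xi$, and a symmetric polynomial does not see such a permutation. So I would first reduce the theorem to the identity $p_\Lc(g\xi) = p_\Lc(\xi)$ for all $g \in G$.

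Concretely, I would fix $g \in G$ and use the defining relation $Ug = \alpha_g U$ of a $G$-symmetric star, where $\alpha_g$ is an $m\times m$ permutation matrix. Substituting into the symbol gives
\[
p_\Lc(g\xi) = p^*\bigl(U(g\xi)\bigr) = p^*\bigl((Ug)\xi\bigr) = p^*\bigl(\alpha_g(U\xi)\bigr),
\]
so that the action of $g$ on the $\xi$-variables has turned into the permutation $\alpha_g$ acting on the entries of $U\xi \in \R^m$. This step is purely formal and uses only the definition of $G$-symmetry.

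The second ingredient is that the reciprocal of a symmetric polynomial is again symmetric: if $p$ is invariant under a permutation $\sigma$, then the per-variable degrees $d_j = \deg_j(p)$ are constant on the orbits of $\sigma$, and the substitution $x_j \mapsto x_j^{-1}$ in the definition $p^*(x) = x_1^{d_1}\cdots x_m^{d_m}\,p(x_1^{-1},\dots,x_m^{-1})$ commutes with permuting indices; a short bookkeeping check then yields $\sigma\cdot p^* = p^*$. In the elementary case this is just $e_d^* = e_{m-d}$. Hence $p^*(\alpha_g\eta) = p^*(\eta)$ for every $\eta$, and combining with the display above gives $p_\Lc(g\xi) = p^*(U\xi) = p_\Lc(\xi)$, so $p_\Lc \in \R[\xi_1,\dots,\xi_n]^G$. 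To upgrade this to $G$-invariance of $\Lc$ as an operator, I would use that $G\subset\OO_n(\R)$: conjugating $\Lc = p^*(\Dc_{u_1},\dots,\Dc_{u_m})$ by the pullback $f \mapsto f\circ g$ sends each $\Dc_{u_i}$ to the directional derivative along the image of $u_i$ under $g$, which is again a branch direction since the branch set is $G$-invariant; symmetry of $p^*$ then shows the conjugated operator equals $\Lc$.

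The step that needs the most care is the combination of the symmetry-of-the-reciprocal lemma with the symbol-to-operator transfer. The computation of $p_\Lc(g\xi)$ itself is immediate, but it is the symmetry of $p$ that lets $p^*$ absorb $\alpha_g$, and it is the orthogonality of $G$ (so that $g^{-1} = g^T \in G$) that identifies invariance of the frequency-side polynomial $p_\Lc$ with genuine invariance of the space-side operator $\Lc$. I would therefore make sure both hypotheses are invoked explicitly, since the content of the theorem is precisely that a finite linear symmetry of the branch set forces the dual symbol into the full $G$-invariant ring $\R[\xi_1,\dots,\xi_n]^G$.
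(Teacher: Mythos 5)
Your proposal is correct and follows essentially the same route as the paper's own proof: use the defining relation $Ug = \alpha_g U$ to convert the action of $g$ on $\xi$ into a permutation of the entries of $U\xi$, then absorb that permutation via the symmetry of the reciprocal polynomial $p^*$, yielding $p_\Lc(g\xi) = p_\Lc(\xi)$. Your added justifications --- the degree bookkeeping showing $p^*$ inherits symmetry from $p$, and the explicit conjugation argument upgrading invariance of the symbol to invariance of the operator $\Lc$ --- are details the paper asserts without proof, so they strengthen rather than diverge from its argument.
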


\begin{proof}
    Since $p$ is a symmetric polynomial, the reciprocal polynomial $p^*$ is also symmetric. The symbol of $\Lc$ is given by $\sigma(\xi) = p^*(U\xi)$. Because the set of branch vectors is $G$-symmetric, any $g\in G \subset \OO_n(\R)$ will permute that set. In other words, $Ug=\alpha_g U$, where $\alpha_g$ is a permutation matrix. Since $p^*$ is symmetric and invariant under permutation we have
    \begin{equation}\label{eq:star_symmetry_expr}
    \sigma(g \xi) = p^*(Ug\xi) =  p^*(\alpha_g U \xi) =  p^*(U \xi) = \sigma(\xi)        
    \end{equation}
    
    proving that $\sigma \in \R[\xi_1, \dots, \xi_n]^G$.

    \textit{Alternative proof:} One can show that the map $\alpha: \operatorname{m-Sets} \rightarrow \R[\xi_1, \dots, \xi_n] $ sending a finite set $\{ u_1, \dots, u_m\} \subset \R^n$ to the polynomial $p^*(U \xi)$ is $G$-equivariant. It then follows that a $G$-fixed point in $\operatorname{m-Sets} $, which is a $G$-invariant subset, is mapped to a $G$-fixed point in $\R[\xi_1, \dots, \xi_n]$, which is an invariant polynomial.  
\end{proof}

Note that above statement can be applied to star transforms with symbol $(p,U)$ where $p$ is any symmetric polynomial. More specifically, if $p^*$ is symmetric then $p^*(\alpha_g y) = p^*(y)$ for any $g\in G$ (see equation \ref{eq:star_symmetry_expr}). \\


Now, let us provide arguments to support the observations in formula \eqref{eq:e_r-2D} and Table \ref{tab:Platonic}. The proof of formula \eqref{eq:e_r-2D} follows from the following well-known fact.  For the symmetry group $G$ associated with a regular $m$-gon ($G$ is the dihedral group of order $2m$ acting on $\R^2$), the ring of
invariants of $G$ is generated by $z\bar{z}$ and $z^m + \bar{z}^m$, where $z=x+iy$ and $\bar{z}=x-iy$, and we have identified $\R^2$ with the complex plane (e.g. see \cite{borcherds-2012}). \\ 

To prove the statements included in Table \ref{tab:Platonic}, we need the following results (e.g. see \cite{Solomon}). Let $G$ be a finite reflection group. Then, the ring of polynomial invariants of $G$ is generated by $n$ algebraically independent forms $f_1, \ldots, f_n$, where $n$ is the dimension of the underlying vector space. It is known that 
the degrees $m_1+1, \ldots , m_n + 1$ of the generators $f_1, \ldots, f_n$, satisfy the product formula $(m_1+1) \ldots  (m_n + 1)=g$, where $g$ is the order of $G$, and that the sum $m_1 + \ldots + m_n$ is equal to the number of reflections in the group. Using the above statements and the appropriate numbers for the Coxeter groups corresponding to the platonic solids, one can compute that the generators of the ring of invariant polynomials in 3D corresponding to the groups:

\begin{enumerate}
    \item A3 (tetrahedron) are of degree 2, 3, 4;
    \item B3 (cube and octahedron) are of degree 2, 4, 6;
    \item H3 (dodecahedron and icosahedron) are of degree 2, 6, 10.
\end{enumerate}
Now, it is rather straightforward to prove the statements listed in Table \ref{tab:Platonic}. For example, let us assume that $U$ is characterized by the vertices of the tetrahedron. Then $e_1(U\xi)$ must be zero, since a polynomial of degree 1 cannot be generated by polynomials of degree 2, 3, or 4. Also, it is clear that $p_2(\xi)=\xi_1^2+\xi_2^2+\xi_3^2$ is the generator of degree 2. Therefore, $e_2(U\xi)$ must coincide with $p_2(\xi)$. The other statements of Table \ref{tab:Platonic} are proved similarly.  Note that these arguments are based on degree of these invariants and, in fact, there are higher degree invariant polynomials that are not expressed in terms of Laplacian. 



\section{Construction of invariant polynomials}\label{sec:inv-pol}

In this section, we write an explicit formula for the symbol of the differential operator $\Lc$ that is dual to a given star transform $\Sc$. Then, for a finite group $G$ represented in $\R^n$, we provide a construction for invariant polynomials in $\R[\xi_1, \dots, \xi_n]^{G}$ using appropriately chosen star transforms. The point of this construction is to offer a new geometric interpretation for the invariants. We start with a $G$-invariant set of points in $\R^n$ as branch vectors of a star transform implying that the corresponding dual symbol is a $G$-invariant polynomial. \\

Given $ U\in M_{m\times n}(\R)$ and the symbol $(e_{m-r}, U)$ of a star transform, we look for an explicit expression for the dual symbol $e_r(U\xi)$ as a polynomial in $\xi_1, \dots, \xi_n$. We denote by $U(k_1, \dots, k_n)$ 
the $m\times r$ matrix with $k_1$ copies of the 1st column of $U$, $k_2$ copies of the 2nd column of $U$, etc, and $k_1+\dots+k_n = r$.
\begin{proposition}\label{prop:explicit_formula}
    For $ U\in M_{m\times n}(\R)$ and $\xi \in \R^m$ we have 
    $$
    e_r(U\xi) = \sum_{|k|=r} {n \choose k_1, \dots, k_n} \operatorname{perm}(U(k_1, \dots, k_n)) \xi_1^{k_1} \dots \xi_n^{k_n}
    $$
    where $\operatorname{perm}$ denotes the permanent of rectangular matrices.
\end{proposition}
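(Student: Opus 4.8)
The plan is to prove the formula by directly expanding $e_r(U\xi)$ as a product of linear forms and extracting, monomial by monomial, the coefficient of $\xi_1^{k_1}\cdots\xi_n^{k_n}$. Write $u_1,\dots,u_m$ for the rows of $U$, so that the $i$-th coordinate of $U\xi$ is the linear form $(U\xi)_i = u_i\cdot\xi = \sum_{j=1}^n U_{ij}\xi_j$. By the definition of the elementary symmetric polynomial,
$$
e_r(U\xi) = \sum_{S\in\binom{[m]}{r}}\ \prod_{i\in S}(u_i\cdot\xi),
$$
a homogeneous polynomial of degree $r$ in $\xi_1,\dots,\xi_n$. The whole content of the proposition is the identification of its coefficients with permanents of the column-repeated matrices $U(k_1,\dots,k_n)$.

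First I would expand each factor by multilinearity: for a fixed $r$-subset $S=\{s_1<\dots<s_r\}$,
$$
\prod_{l=1}^r (u_{s_l}\cdot\xi) = \sum_{\phi:\{1,\dots,r\}\to\{1,\dots,n\}}\ \Big(\prod_{l=1}^r U_{s_l,\phi(l)}\Big)\,\xi_{\phi(1)}\cdots\xi_{\phi(r)}.
$$
The monomial $\xi_{\phi(1)}\cdots\xi_{\phi(r)}$ equals $\xi_1^{k_1}\cdots\xi_n^{k_n}$ precisely when $\phi$ has \emph{type} $k$, i.e. $\#\phi^{-1}(j)=k_j$ for each $j$. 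Hence the coefficient of $\xi^k$ in $e_r(U\xi)$ is $\sum_{S}\sum_{\phi\ \text{of type}\ k}\prod_{l}U_{s_l,\phi(l)}$. The key structural observation is that this double sum is, up to a multiplicity, exactly the permanent expansion of $U(k_1,\dots,k_n)$: a nonzero term of $\operatorname{perm}\big(U(k_1,\dots,k_n)\big)$ is a choice of an injection from the $r$ column-slots into the $m$ rows, which is the same data as an $r$-subset $S$ of rows together with an assignment of each row to a slot — that is, a type-$k$ column assignment.

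The only real obstacle, and the step I would carry out most carefully, is the multiplicity bookkeeping forced by the repeated columns of $U(k_1,\dots,k_n)$. Since column $j$ of $U$ appears in $k_j$ identical slots, the subgroup of $S_r$ permuting equal slots among themselves has order $k_1!\cdots k_n!$, so each distinct pair $(S,\phi)$ is represented by exactly $k_1!\cdots k_n!$ injections in the permanent sum. Consequently $\operatorname{perm}(U(k_1,\dots,k_n))$ records the coefficient of $\xi^k$ weighted by this factor, and the multinomial coefficient in the statement is precisely what supplies the correct normalization (recalling $\binom{r}{k_1,\dots,k_n}=r!/(k_1!\cdots k_n!)$). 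Once this stabilizer count is verified — that distinct $(S,\phi)$ index disjoint orbits of injections, each of size $\prod_j k_j!$ — assembling the coefficients over all $k$ with $|k|=r$ yields the claimed formula, with everything else reducing to the routine expansion of a product of linear forms.
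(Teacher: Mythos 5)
Your route is the same one the paper takes --- expand $e_r(U\xi)$ as a sum over $r$-subsets of rows, distribute the product of linear forms, group monomials by their type $k$, and identify the coefficient of $\xi_1^{k_1}\cdots\xi_n^{k_n}$ with a permanent of the column-repeated matrix $U(k_1,\dots,k_n)$ --- and your orbit count of injections is carried out correctly, indeed more carefully than in the paper, whose proof simply declares the order-dependent bracket $\sum_{i_1<\cdots<i_r}u_{i_1j_1}\cdots u_{i_rj_r}$ to be the permanent. The problem is your last step: the conclusion contradicts your own bookkeeping. Write $c_k$ for the coefficient of $\xi_1^{k_1}\cdots\xi_n^{k_n}$ in $e_r(U\xi)$, and use the standard permanent of a tall $m\times r$ matrix, $\operatorname{perm}(A)=\sum_{\sigma}\prod_{l=1}^r A_{\sigma(l),l}$, the sum running over injections $\sigma\colon\{1,\dots,r\}\hookrightarrow\{1,\dots,m\}$ (this is the convention you yourself describe). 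Your correct observation that each pair $(S,\phi)$ is hit by exactly $k_1!\cdots k_n!$ injections says precisely that $\operatorname{perm}\bigl(U(k_1,\dots,k_n)\bigr)=k_1!\cdots k_n!\,c_k$, i.e.\ the normalizing factor in front of the permanent must be $1/(k_1!\cdots k_n!)$. Multiplying instead by the multinomial coefficient $r!/(k_1!\cdots k_n!)$, as you assert in the sentence ``the multinomial coefficient in the statement is precisely what supplies the correct normalization,'' produces $r!\,c_k$, overcounting every coefficient by $r!$. The identity your argument actually proves is
\begin{equation*}
e_r(U\xi)=\sum_{|k|=r}\frac{1}{k_1!\cdots k_n!}\operatorname{perm}\bigl(U(k_1,\dots,k_n)\bigr)\,\xi_1^{k_1}\cdots\xi_n^{k_n},
\end{equation*}
which differs from the stated formula by the factor $r!$ in every term.

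A check with $m=n=r=2$ and $U=\begin{pmatrix}a&b\\c&d\end{pmatrix}$ makes the discrepancy concrete: $e_2(U\xi)=ac\,\xi_1^2+(ad+bc)\,\xi_1\xi_2+bd\,\xi_2^2$, whereas multinomial coefficients times standard permanents give $2ac\,\xi_1^2+2(ad+bc)\,\xi_1\xi_2+2bd\,\xi_2^2$. To be fair, the defect you inherited lives in the statement itself (whose coefficient is even written $\binom{n}{k_1,\dots,k_n}$ although $k_1+\cdots+k_n=r$), and the paper's own proof commits the matching error: it identifies each bracket with the full permanent and then also multiplies by the number of orderings, double-counting by exactly the same factor. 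But that is the point of doing the stabilizer computation carefully: yours is the correct repair, and it shows the formula holds with $1/(k_1!\cdots k_n!)$ in place of the multinomial coefficient (equivalently, with the multinomial coefficient and a $\frac{1}{r!}$-normalized permanent). A finished proof should flag this constant discrepancy rather than assert that the two normalizations agree.
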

\begin{proof}
    The elementary symmetric polynomial $e_r$ can be expressed as $$e_r(y_1, \dots, y_m) = \sum_{i_1<\dots<i_r} y_{i_1}y_{i_2}\cdots y_{i_r}$$
and substituting the components $y_i = (U\xi)_i = \sum_{j=1}^n u_{ij}\xi_j$ we get
\begin{equation} \label{eq-e_rU1}
\begin{split}
e_r(U\xi) & = \sum_{i_1<\dots<i_r} \left(\sum_{j=1}^n u_{i_1 j}\xi_j \right)\cdots \left(\sum_{j=1}^n u_{i_r j}\xi_j \right) \\
 & = \sum_{i_1<\dots<i_r} \sum_{1\leq j_1,\dots,j_r \leq n} \left( u_{i_1 j_1} \dots u_{i_r j_r} \right) \xi_{j_1} \dots \xi_{j_r} \\
 & = \sum_{1\leq j_1,\dots,j_r \leq n} \left[ \sum_{i_1<\dots<i_r}  \left( u_{i_1 j_1} \dots u_{i_r j_r} \right) \right] \xi_{j_1} \dots \xi_{j_r}
\end{split}
\end{equation}
the expression inside the bracket is the permanent of the matrix created from $j_1, \dots, j_r$ columns of $U$ (with possible repetition of indices). This expression is independent of the order of columns and we can use a multi-index notation $k=(k_1, \dots, k_n)$ that is uniquely defined by the relation $\xi_{j_1} \dots \xi_{j_r} = \xi_1^{k_1}\dots \xi_n^{k_n}$. We now have 
\begin{equation} \label{eq-e_rU2}
\begin{split}
e_r(U\xi) & = \sum_{1\leq j_1,\dots,j_r \leq n} \operatorname{perm}(U(k_1, \dots, k_n)) \xi_1^{k_1} \dots \xi_n^{k_n} \\
 & = \sum_{|k|=r} {n \choose k_1, \dots, k_n} \operatorname{perm}(U(k_1, \dots, k_n)) \xi_1^{k_1} \dots \xi_n^{k_n}
\end{split}
\end{equation}

\end{proof}

Now, let us explain the construction of invariant polynomials. Let $G \subset \GL_n(\R)$ be a finite group represented in $\R^n$. Let the branch set $\{u_1, \dots, u_m\} \subset \R^n$ be a finite set that is invariant under the action of $G$ (for instance, given a finite set $S$ define the branch set as $ \{ g\cdot s: g\in G, \, s\in S \} \subset \R^n$). Given a symmetric polynomial $p$ in $m$ variables we form the star transform $\Sc = p(\Xc_{u_1}, \dots , \Xc_{u_m})$ with the dual differential operator $\Lc$. Theorem \ref{thm:star-symmetry} states that the symbol $\sigma_{\Lc}(\xi) = p^*(U\xi)$ belongs to the ring of invariant polynomials $\R[\xi_1, \dots, \xi_n]^{G}$. Then Proposition \ref{prop:explicit_formula} provides an explicit formula for the invariants associated with star transforms. We are now left with the following question: \\

\textbf{Question:} Does the above construction based on elementary symmetric polynomials produce all generators of the invariant ring? \\

Some calculations suggest that the above construction of invariants is not equivalent to the well-known Reynolds operator techniques (see \cite{Sturmfels} for Reynolds operators and further discussion in invariant theory). However, it is not clear to us whether this construction has an analogue in classical invariant theory. 

\vspace{4mm}

\section{Non-invertible star transforms and the Fano variety}\label{sec:non-invertible}

In this section, we make use of the observation that a star transform $\Sc$ is non-invertible if and only if its branch matrix $U$ determines a linear subspace contained in the algebraic variety defined by the vanishing of the symbol of its dual differential operator.  The classification of non-invertible star transforms of order $d$ then becomes a problem about the real projective geometry of the appropriate Fano variety of linear subspaces of a hypersurface in affine or projective space. 

\begin{theorem}\label{thm:degenerate_star}    
Let $\Sc$ be a star transform that is realizable over $\R^n$ with symbol $(p, U)$, where $p \in \R[x_1,\dotsc,x_m]$ and $U$ is the $m\times n$ branch matrix.  Then $\Sc$ is non-invertible if and only if the image of $U$ is an affine linear subspace contained in the zero locus of the dual symbol $p^*$, i.e.,
\[    
\im(U) \subset V(p^*)(\R) \subset \AAA^m(\R).
\]
Moreover, if $\Sc$ has order $d$, then $\Sc$ is non-invertible if and only if the image of $U$ is a projective linear subspace 
\[
\PP(\im(U)) \subset V(p^*)(\R) \subset \PP^{m-1}(\R)
\]
in the projective hypersurface defined by the dual symbol. 

Furthermore, star transforms with symbols $(p, U)$ and $(p, U')$ induce the same linear subset of $V(p^*)$ if and only if $U'=Ug$ for some $g\in \GL_n(\R)$. 
\end{theorem}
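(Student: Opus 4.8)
The plan is to reduce the final clause to a clean fact in linear algebra. By the earlier (already established) parts of the theorem, a non-invertible star transform with symbol $(p,U)$ induces the linear subset $\im(U) \subset V(p^*)(\R) \subset \AAA^m(\R)$, where $\im(U) = \{U\xi : \xi \in \R^n\}$ is the column space of $U$, a linear subspace of $\R^m$ through the origin (and in the order-$d$ case its projectivization $\PP(\im(U)) \subset \PP^{m-1}(\R)$). Since $p$, and hence $p^*$ and the ambient variety $V(p^*)$, is common to both symbols, two such transforms ``induce the same linear subset'' precisely when $\im(U) = \im(U')$ as subspaces of $\R^m$; because projectivization is injective on linear subspaces through the origin, the affine and projective formulations coincide. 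Thus I must prove the purely linear-algebraic statement: $\im(U) = \im(U')$ if and only if $U' = Ug$ for some $g \in \GL_n(\R)$.

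The forward implication is immediate: if $U' = Ug$ with $g \in \GL_n(\R)$, then right multiplication by an invertible matrix only reparametrizes the domain, so $\im(U') = \{Ug\xi : \xi \in \R^n\} = \{U\eta : \eta \in \R^n\} = \im(U)$, using $g(\R^n) = \R^n$.

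For the converse, I would suppose $\im(U) = \im(U') =: W$ and set $k = \dim W$, so that $\operatorname{rank} U = \operatorname{rank} U' = k$. Viewing $U$ and $U'$ as surjective linear maps $\R^n \twoheadrightarrow W$, each has kernel of dimension $n-k$, and the idea is to build $g$ by matching up kernels and complements. I would choose direct-sum decompositions $\R^n = \ker U \oplus S$ and $\R^n = \ker U' \oplus S'$, so that $U|_S : S \xrightarrow{\sim} W$ and $U'|_{S'} : S' \xrightarrow{\sim} W$ are isomorphisms, then define $g \in \operatorname{End}(\R^n)$ by sending $\ker U'$ isomorphically onto $\ker U$ (possible since both have dimension $n-k$) and setting $g|_{S'} = (U|_S)^{-1} \circ (U'|_{S'}) : S' \to S$. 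Then $g$ carries $\R^n = \ker U' \oplus S'$ isomorphically onto $\R^n = \ker U \oplus S$, so $g \in \GL_n(\R)$, and by construction $Ug = U'$ on both summands, hence everywhere. When $U$ has full column rank ($k=n$) the kernels are trivial and $g = U^{-1}U'$ is the unique such matrix, interpreting $U^{-1}$ as the inverse of the corestriction $U : \R^n \xrightarrow{\sim} W$.

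I expect the only real obstacle to be this converse in the non–full-rank case, where $g$ is underdetermined on $\ker U'$ and one must verify that the splitting genuinely assembles into a global automorphism of $\R^n$ rather than merely a fiberwise matching; the full-rank case is essentially immediate. Everything takes place over $\R$ and the construction is field-agnostic, so no issue of real versus complex scalars arises, and the equivalence of the affine and projective versions noted in the first paragraph closes the argument in both the general and the order-$d$ settings.
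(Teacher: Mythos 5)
Your proposal has a genuine gap: it proves only the final clause of the theorem and assumes the first two. The phrase ``by the earlier (already established) parts of the theorem'' refers to the two equivalences that are themselves the statement to be proved --- that $\Sc$ is non-invertible if and only if $\im(U) \subset V(p^*)(\R)$, and the projective version when $\Sc$ has order $d$. Nothing earlier establishes these; they are the analytic core of the theorem, and the paper derives them in two short steps you never supply. By Theorem~\ref{thm:stardiffinv} (whose proof rests on the Paley--Wiener--Schwartz theorem), $\Sc$ is non-invertible if and only if the dual differential operator $\Lc$ vanishes, i.e., if and only if its total symbol $p_{\Lc}(\xi) = p^*(U\xi)$ is identically zero in $\xi$, which is precisely the condition that the column space of $U$ lies in the zero locus of $p^*$; and when $\Sc$ has order $d$, the dual symbol $p^*$ is homogeneous, so $V(p^*)$ is a cone and the containment can be read projectively as $\PP(\im(U)) \subset V(p^*)(\R) \subset \PP^{m-1}(\R)$. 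Without invoking Theorem~\ref{thm:stardiffinv} (or reproving its Fourier-analytic content), your treatment of those parts is circular.

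On the clause you do address, your argument is correct and in fact more careful than the paper's: the paper disposes of it with the bare assertion that $U$ and $U'$ have identical ranges if and only if $U' = Ug$ for some $g \in \GL_n(\R)$, whereas you verify the nontrivial converse, including the rank-deficient case, by splitting $\R^n = \ker U \oplus S = \ker U' \oplus S'$, matching kernels by an arbitrary isomorphism and complements by $(U|_S)^{-1} \circ U'|_{S'}$, and checking $Ug = U'$ on both summands. That construction is sound (injectivity of $g$ follows since $g(\ker U') = \ker U$ and $g(S') = S$ are complementary), and your reduction of the projective formulation to the affine one, via injectivity of projectivization on linear subspaces through the origin, is fine. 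The repair is simply to restore the missing half: cite Theorem~\ref{thm:stardiffinv} together with the identity $p_{\Lc}(\xi) = p^*(U\xi)$ to obtain both containment criteria, then append your linear-algebra argument for the final clause.
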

\begin{proof}
    By Theorem~\ref{thm:stardiffinv}, $\Sc$ is non-invertible if and only if the corresponding dual differential operator $\Lc$ is zero if and only if its symbol $p_{\Lc}(\xi) = p^*(U \xi)$ is uniformly zero. In other words, this says that the range of $U$ is contained in the zero locus of $p^*$.  If $\Sc$ has order $d$, then its dual symbol is a homogeneous polynomial, hence we can consider everything projectively. 
    The second statement follows from the fact that $U$ and $U'$ have identical ranges if and only if $U'=Ug$.
\end{proof}

From now on, we will only consider star transforms of order $d$, hence only the version in projective space.  Later on, we will further consider the special case of linear elementary star transforms, i.e., those with $p = x_1 + \dotsm + x_m$.

For a projective algebraic variety $X \subset \PP^{m-1}$, the \emph{Fano scheme} of $(k-1)$-dimensional projective linear subspaces $L \subset \PP^{m-1}$ contained in $X$ is a closed subvariety $F_{k-1}(X) \subset G(k,m)$ of the Grassmannian of $k$-dimensional subspaces in an $m$-dimensional vector space.  See \cite[\S8.1.1]{eisenbud_harris:3264}, \cite{altman_kleiman}, \cite{fano1,fano2}, \cite[\S3.3]{harris_mazur_pandharipande} for further details.

Theorem~\ref{thm:degenerate_star} then says that for a fixed degree $d$ homogeneous polynomial symbol $p \in \R[x_1,\dotsc,x_m]$, the set of $m \times n$ branch matrices $U$ (up to the action of $\GL_n(\R)$ by right multiplication) whose associated star transform on $\R^n$ is non-invertible, is in bijection with the set of $\R$-points of $F_{n-1}(X)(\R)$, where $X = \{p^*=0\} \subset \PP^{m-1}$.

\medskip

We will start with the example of linear elementary star transforms on $\R^2$ with four branch vectors, from which we will be led to consider lines on Cayley's nodal cubic surface. 

\subsection{Lines on Cayley's nodal cubic surface}

If we consider non-invertible star transforms on $\R^2$ with polynomial symbol $p = e_1(x_1,\dotsc,x_4)=x_1 + \dotsc + x_4$, then their branch matrices determine a projective line in the hypersurface $X_3 \subset \PP^3$ defined by
\[
p^*(x_1,\dotsc,x_4) = e_3(x_1,\dotsc,x_4) = x_2 x_3 x_4 + x_1 x_3 x_4 + x_1 x_2 x_4 + x_1 x_2 x_3 = 0
\]
which is Cayley's nodal cubic surface.  It is known that this cubic surface has four nodes arranged at the vertices of a tetrahedron in $\PP^3$, and contains the 6 lines passing through pairs of nodes (we call these the singular lines), as well as 3 lines in the smooth locus (we call these the smooth lines).   

All these lines are defined over $\R$.  Indeed, 
the 6 singular lines are defined by the intersections of pairs of coordinate hyperplanes
\[
L_{ij} = \{x_i=x_j=0\}, 
\]
for $1 \leq i < j \leq 4$, while the three smooth lines are
\[
M_{kl} = \{x_k + x_l = x_m + x_n = 0\}
\]
where $\{k,l,m,n\} = \{1,2,3,4\}$ is a matching.

As a scheme, $F_1(X_3)$ has dimension 0 and degree 27 (as is the case for the Fano scheme of lines on any normal cubic surface), with each of the 6 singular lines appearing as points of multiplicity 4 and each of the 3 smooth lines appearing as smooth points.  More generally, Fano~\cite{fano1,fano2} proved that lines on a surface through a singular point correspond to singular points on the Fano scheme of lines.

In light of Theorem~\ref{thm:degenerate_star}, the non-invertible star transforms on $\R^2$ whose branch matrices determine the singular lines are \emph{degenerate}, in the sense that two of the four branch vectors are zero.  However, non-invertible star transforms whose branch matrices determine the smooth lines arise naturally from the theory of Radon transform.  We need the following basic fact about the invertibility of the star transform.    

\begin{lemma}\label{prop:radon_invertibility}
    If the branch vectors of a realizable star transform $\Sc$ of order 1 on $\R^n$ come in pairs of opposite signs, 
    then $\Sc$ is not invertible.
\end{lemma}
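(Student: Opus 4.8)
The plan is to reduce invertibility to a statement about the dual differential operator via Theorem~\ref{thm:stardiffinv}, which asserts that a realizable star transform is injective if and only if its dual differential operator $\Lc$ is nonzero. Since every order $1$ star transform is automatically realizable on $\R^n$, it suffices to prove that $\Lc \equiv 0$, equivalently that its symbol $p^*(U\xi)$ vanishes identically in $\xi$.

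First I would identify the dual symbol explicitly. For an order $1$ elementary star transform the polynomial symbol is $p = e_1(x_1,\dots,x_m) = x_1 + \dots + x_m$, in which every variable occurs to degree one; a direct computation then gives $p^* = e_{m-1}$. Hence the symbol of $\Lc$ is $\sigma_{\Lc}(\xi) = e_{m-1}(U\xi)$, where the entries of the column $U\xi$ are precisely the pairings $u_i \cdot \xi$ of $\xi$ against the branch vectors.

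The heart of the argument is a parity observation. Writing the branch vectors as $u_1, -u_1, \dots, u_k, -u_k$ with $m = 2k$ and setting $y_j = u_j \cdot \xi$, the multiset of entries of $U\xi$ is $\{y_1, -y_1, \dots, y_k, -y_k\}$, which is invariant under negation. I would encode the elementary symmetric polynomials through the generating identity $\prod_{i=1}^m (t + z_i) = \sum_{r=0}^m e_r(z)\, t^{m-r}$; with $z = (y_1, -y_1, \dots, y_k, -y_k)$ the left-hand side collapses to $\prod_{j=1}^k (t^2 - y_j^2)$, a polynomial in $t^2$. Consequently every odd power of $t$ has vanishing coefficient, so $e_r(z) = 0$ for all odd $r$. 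Since $m$ is even, $m-1$ is odd, whence $e_{m-1}(U\xi) = 0$ identically in $\xi$. This forces $\Lc \equiv 0$, and Theorem~\ref{thm:stardiffinv} then yields the non-invertibility of $\Sc$.

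There is no serious analytic obstacle here; the only points requiring care are the clean evaluation $p^* = e_{m-1}$ and the bookkeeping of parity. Specifically, I would confirm that the symmetry of the value multiset under $z \mapsto -z$ survives repeated opposite pairs, and that the hypothesis forces $m$ to be even (no branch vector can be its own opposite, since branch vectors are nonzero), so that the relevant index $m-1$ is indeed odd.
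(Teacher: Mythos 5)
Your proof is correct, but it follows a genuinely different route from the paper's. The paper argues tomographically: because $\Xc_{u}f+\Xc_{-u}f$ is a (weighted) full line integral, the data $\Sc f$ is determined by the X-ray transform of $f$ evaluated at only the $m/2$ directions $u_1,\dotsc,u_{m/2}$, and by classical results on ``ghosts'' (cited to Helgason and Louis) no compactly supported function is determined by its X-ray transform in finitely many directions; hence no inversion of $\Sc$ can exist. You instead establish the identity $e_{m-1}(U\xi)\equiv 0$ directly, via the parity/generating-function observation that all odd elementary symmetric polynomials vanish on a negation-invariant multiset, and then apply Theorem~\ref{thm:stardiffinv}; this is not circular, since that theorem is proved in Section~\ref{Sec:Dual} by Fourier-analytic means independent of this lemma. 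The comparison is instructive: your argument is self-contained and produces the vanishing of the dual symbol as the primary fact, whereas the paper deduces that vanishing only afterwards, as a consequence of the lemma via Theorem~\ref{thm:degenerate_star} --- indeed the authors' stated point in Section~\ref{sec:non-invertible} is that ray-transform theory ``predicts'' the algebraic degeneracy, so your proof reverses the paper's direction of inference (and, as a byproduct, verifies the containments used in Theorem~\ref{thm:subspaces} without any tomography). One caveat shared by both proofs: they really concern the elementary symbol $p=e_1$, i.e.\ equal coefficients on each opposite pair. For a general order-$1$ symbol the statement fails --- e.g.\ $\Sc=\Xc_{u}+2\Xc_{-u}$ has dual symbol $u\cdot\xi\not\equiv 0$ and is therefore injective by Theorem~\ref{thm:stardiffinv} --- so your explicit restriction to $p=e_1$ matches the reading that the paper's own proof implicitly requires (it needs equal weights on each pair so that $\Sc f$ is a function of the line-integral data alone).
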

\begin{proof}
    Let $\mathcal{P}\!f$ be the \textsf{X}-ray transform in $\R^n$ (corresponding to the standard Radon transform when $n=2$)  of a compactly supported continuous function $f$. The operator obtained by evaluating 
    $\mathcal{P}\!f(\theta, x)$ at finitely many directions $\{\theta_1, \dots, \theta_{m/2}\}\subseteq S^{n-1}$ is not invertible
    (e.g. see \cite{helgason1999radon, louis1981ghosts}). If the branches of a star transform $\Sc$ of order 1 have the form 
    $$
    u_1, -u_1, u_2, -u_2, \dots, u_{m/2}, -u_{m/2},
    $$ 
    then there are finitely many directions $\theta_1, \dots, \theta_{m/2},$ such that the \textsf{X}-ray transform data restricted to those directions determine the star transform $\Sc f$. Therefore, an inversion of $\Sc$ would reconstruct $f$ from the values of $\mathcal{P}\!f(s, \theta)$ for finitely many directions $\theta$, which leads to a contradiction. Hence, such a star transform cannot be invertible.
\end{proof}

In light of Lemma~\ref{prop:radon_invertibility}, non-invertible star transforms with symbol $(e_1, U)$ correspond to $4\times 2$ branch matrices $U$ consisting of pairs of opposite sign rows. 
Up to the right action of $\GL_2(\R)$, we can choose the rows of $U$, i.e., the set of branch vectors, to be $\{ (1,0), (-1,0), (0,1), (0,-1) \}$, corresponding to the star transform associated with the square of side length 2 in $\R^2$.  Using the fact that we can also permute the branch vectors, we obtain the three branch matrices
\begin{equation}\label{eq:Umatrices}
 U_{1}=\left[\begin{matrix}
1 & 0 \\
-1 & 0 \\ 
0 & 1 \\ 
0 & -1 
\end{matrix}\right], \quad
U_{2}=\left[\begin{matrix}
1 & 0 \\
0 & 1 \\ 
-1 & 0 \\ 
0 & -1 
\end{matrix}\right], \quad 
U_{3}=\left[\begin{matrix}
1 & 0 \\
-1 & 0 \\ 
0 & -1 \\ 
0 & 1 
\end{matrix}\right].
\end{equation}
whose images recover the 3 smooth lines.

\medskip

We now generalize this example to higher dimension.


\subsection{Fano varieties of elementary symmetric hypersurfaces}\label{sec:lin-sub-in-V}

For $n \geq 2$ and $m=2n$, if we consider non-invertible star transforms on $\R^n$ with polynomial symbol $p = e_1(x_1,\dotsc,x_m)$, then their $m \times n$ branch matrices determine projective  $(n-1)$-dimensional linear subspaces contained in the hypersurface $X_{m-1} \subset \PP^{m-1}$ defined by $
p^* = e_{m-1}(x_1,\dotsc,x_m) = 0$.  We are thus interested in the Fano scheme $F_{n-1}(X_{m-1})$, and the following gathers together what we can immediately say about its geometry.

\begin{proposition}
\label{cor:fano}
    Fix $n \geq 2$ and $m=2n$.  Let $X_{m-1} \subset \PP^{m-1}$ be the elementary symmetric hypersurface defined by $e_{m-1}=0$, and let $F_{n-1}(X_{m-1})$ be the Fano scheme of projective $(n-1)$-dimensional linear subspaces in $X_{m-1}$. Then $F_{n-1}(X_{m-1})$ contains 
    \begin{enumerate}\setlength{\itemsep}{8pt}
        \item $(m-1)!! = m!/2^nn!$ isolated closed points corresponding to linear spaces of the form
        \[
        { x_1 + x_{i_1} = x_2 + x_{i_2} = ... = x_{n} + x_{i_n} = 0 }
        \]
determined by perfect matchings $\{1,i_1,\dotsc,n,i_n\} = \{1,\dotsc,m\}$,

        \item $\binom{m}{2}$ irreducible components isomorphic to the grassmannian $G(n,m-2)$, each consisting of $(n-1)$-dimensional linear spaces contained in a projective $(m-3)$-dimensional linear subspaces defined by the vanishing of any two of the coordinates. 
    \end{enumerate}
\end{proposition}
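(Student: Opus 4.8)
The plan is to handle the two families separately: first establish containment and the stated counts, then prove isolatedness for the matching spaces (type 1) and the component statement for the coordinate spaces (type 2) via tangent space and deformation computations on the Fano scheme. Writing $f = e_{m-1}$ and using the formal identity $f(x) = \left(\prod_i x_i\right)\sum_i x_i^{-1}$, containment is quick: on a matching plane $L = \{x_1 + x_{i_1} = \dots = x_n + x_{i_n} = 0\}$ the paired reciprocals give $\sum_i x_i^{-1} \equiv 0$, so $f|_L \equiv 0$; on a coordinate plane $\{x_a = x_b = 0\}$ every degree-$(m-1)$ monomial of $f$ omits exactly one variable and hence still contains $x_a$ or $x_b$, so $f$ vanishes identically. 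The counts are then immediate: perfect matchings of $\{1,\dots,m\}$ number $(m-1)!! = m!/(2^n n!)$; there are $\binom{m}{2}$ coordinate pairs; and the $(n-1)$-planes inside a fixed $\PP^{m-3} = \{x_a = x_b = 0\}$ are parametrized by $G(n,m-2)$, all defined over $\R$.

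For type 1, I would show each matching plane is an isolated reduced point by proving that the Zariski tangent space of $F_{n-1}(X_{m-1})$ there vanishes. Using the standard description (cf.\ \cite{eisenbud_harris:3264}), for $L = \PP(W)$ with $W \subseteq V = \R^m$ of dimension $n$ one has $T_{[W]}F_{n-1}(X_{m-1}) = \ker \Phi$, where $\Phi \colon \operatorname{Hom}(W, V/W) \to \operatorname{Sym}^{m-1}(W^*)$ sends $\phi$ to $w \mapsto \sum_i (\partial_i f)(w)\,\tilde\phi_i(w)$ for any lift $\tilde\phi$. On the matching plane, parametrized by $t=(t_1,\dots,t_n)$ with $x_{a_k} = t_k$ and $x_{b_k} = -t_k$, the relation $\partial_i f = e_{m-2}(x \setminus x_i)$ yields $\partial_{a_k} f|_W = \partial_{b_k} f|_W = (-1)^{n+1}\prod_{l\neq k} t_l^2$. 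Choosing the complement spanned by the vectors $e_{a_j}+e_{b_j}$ and writing $\phi$ through scalars $\mu_{kj}$, the image $\Phi(\phi)$ becomes a combination $\sum_{j,k}\mu_{kj}\,t_k\prod_{l\neq j}t_l^2$ of the $n^2$ monomials $t_k\prod_{l\neq j}t_l^2$. These are pairwise distinct (the diagonal terms $k=j$ have a unique exponent-$1$ variable, the off-diagonal ones a unique exponent-$3$ and a unique exponent-$0$ variable), so $\Phi$ is injective, the tangent space is $0$, and the point is reduced and isolated.

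The main obstacle is type 2, because the tangent space computation does not settle it: the same analysis gives $\dim T_{[W]}F = n(n-1)$ at a generic $W \subseteq \{x_a = x_b = 0\}$, which strictly exceeds $\dim G(n,m-2) = n(n-2)$, so the Fano scheme is generically non-reduced along this locus and one cannot deduce "component" from dimensions. To get around this I would analyze genuine (integrated) deformations. Parametrizing a nearby plane $W'$ by the forms $\alpha_j = x_j|_{W'}$ for $j \neq a,b$ together with $\beta = x_a|_{W'}$ and $\gamma = x_b|_{W'}$, the containment condition $f|_{W'} = 0$ reduces to the single identity $E_{m-2}(\beta+\gamma) + E_{m-3}\beta\gamma = 0$, where $E_{m-2} = \prod_j \alpha_j$ and $E_{m-3} = e_{m-3}(\alpha)$; dividing by $\prod_j \alpha_j$ this is exactly $\beta^{-1} + \gamma^{-1} + \sum_j \alpha_j^{-1} = 0$. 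For generic $W$ the $m-2 = 2n-2$ forms $\alpha_j$ are pairwise non-proportional, and a partial-fractions argument shows a sum of reciprocals of linear forms vanishes only when the forms split into proportionality classes whose scalar reciprocals sum to zero within each class. Since $\beta$ and $\gamma$ can each be proportional to at most one $\alpha_j$, for $n \geq 3$ at least $2n-4 \geq 2$ of the $\alpha_j$ remain singleton classes with non-cancelable poles, forcing $\beta = \gamma = 0$; hence near its generic point the only planes in $X_{m-1}$ are those contained in $\{x_a=x_b=0\}$, proving $G(n,m-2)$ is an irreducible component (with its reduced structure). The case $n=2$ is the Cayley surface already discussed, where $G(2,2)$ is a single non-reduced multiplicity-$4$ point, still a component. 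I expect the bookkeeping in this partial-fractions step—pinning down the genericity that keeps the $\alpha_j$ pairwise non-proportional and disposing of the degenerate subcases where $\beta$ or $\gamma$ meets a coordinate hyperplane—to be the most delicate part of the argument.
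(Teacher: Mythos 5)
Your proof is correct --- I checked the key computations --- but it takes a genuinely different route from the paper, which in fact offers no deductive proof of this proposition: there the containments and counts are presented as ``what we can immediately say,'' the $(m-1)!!$ matching subspaces are produced and counted by the tomographic argument (opposite-sign branch vectors give non-invertible star transforms by Lemma~\ref{prop:radon_invertibility}, whose branch matrices span linear subspaces of $X_{m-1}$ by Theorem~\ref{thm:degenerate_star}; this is carried out in Theorem~\ref{thm:subspaces}), and the scheme-theoretic assertions --- isolatedness of the matching points, the Grassmannians being components, multiplicity four --- are supported only by the remark that matching planes avoid the singular locus, by Fano's classical theorem for surfaces, and by \texttt{Magma} computations for $n=2,3$. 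You instead argue purely algebro-geometrically: containment via the identity $e_{m-1}(x)=\left(\prod_i x_i\right)\sum_i x_i^{-1}$; isolatedness \emph{and} reducedness of the matching points via the normal map $\Phi$, where your formulas $\partial_{a_k}f|_W=\partial_{b_k}f|_W=(-1)^{n+1}\prod_{l\neq k}t_l^2$ (the mixed terms cancel in opposite-sign pairs) and the pairwise distinctness of the monomials $t_k\prod_{l\neq j}t_l^2$ are both correct, so $T_{[W]}F_{n-1}(X_{m-1})=0$; and component-ness of $G(n,m-2)$ by showing that any plane near a generic coordinate plane satisfying $\prod_j\alpha_j\,(\beta+\gamma)+\beta\gamma\, e_{m-3}(\alpha)=0$ must have $\beta=\gamma=0$ when $n\geq 3$, since at least $2n-4\geq 2$ of the pairwise non-proportional forms $\alpha_j$ lie in singleton proportionality classes whose poles cannot cancel; the fallback to zero-dimensionality of $F_1(X_3)$ for $n=2$ is also right. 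Your side observation that $\dim T_{[W]}F = n(n-1) > n(n-2) = \dim G(n,m-2)$ along the Grassmannian locus is correct (only $\partial_a f$ and $\partial_b f$ survive on $\{x_a=x_b=0\}$, both equal to $\prod_j \alpha_j$, giving exactly $n$ conditions); it explains why the extra deformation step is unavoidable and is consistent with the paper's observed multiplicity four. The trade-off: your route actually proves ``isolated'' and ``irreducible component'' in all dimensions, which the paper asserts but verifies only computationally in low dimension, while the paper's route buys the tomographic interpretation that motivates the statement but yields no scheme structure. The bookkeeping you flag at the end is routine, not a gap: nonvanishing and pairwise non-proportionality of the $\alpha_j$ are open conditions on the ambient Grassmannian holding at a generic point of $G(n,m-2)$, and the degenerate case $\beta=0$ immediately forces $\gamma=0$ because $\prod_j\alpha_j\neq 0$ in the (integral) polynomial ring.
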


Based in part on computational evidence using \texttt{Magma} for $n=2,3$, we conjecture that $F_{n-1}(X_{m-1})$ has no other irreducible components.  We remark that since the isolated points correspond to linear spaces contained in the smooth locus of $X_{m-1}$, they are smooth points of the Fano scheme, while the points in the grassmannian components are singular since they correspond to linear spaces meeting the singular locus of $X_{m-1}$.  Computational evidence for $n=2,3$ indicates that each grassmannian component should have multiplicity four in the Fano scheme.

Another piece of motivation for the conjectured description of the isolated points comes from the Radon transform perspective of the possible non-invertible star transforms and their relation with standard Ray transform.  

\begin{theorem}\label{thm:subspaces}
Fix $n \geq 2$ and $m=2n$.  Let $X_{m-1} \subset \PP^{m-1}$ be the elementary symmetric hypersurface defined by $e_{m-1}=0$.  Then all the $(m-1)!!$ 
isolated projective $(n-1)$-dimensional linear subspaces contained in $X_{m-1}$ correspond to realizable non-invertible star transforms. 
\end{theorem}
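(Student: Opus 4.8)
The plan is to exhibit, for each of the $(m-1)!!$ perfect matchings of $\{1,\dots,m\}$, an explicit branch matrix $U$ with no zero rows whose image is the corresponding isolated linear subspace, and then to observe that the associated star transform is automatically realizable (since it has order $1$) and non-invertible (since its branch vectors come in opposite-sign pairs). The whole point is that each such Fano point is represented by an \emph{honest} star transform, in contrast to the degenerate loci (e.g.\ the singular lines $L_{ij}$ in the surface case) whose natural branch matrices have vanishing rows.

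First I would fix a matching, written as pairs $(a_1,b_1),\dots,(a_n,b_n)$ with $\{a_1,b_1,\dots,a_n,b_n\}=\{1,\dots,m\}$, so that the associated isolated point of $F_{n-1}(X_{m-1})$ is the subspace $L=\{x_{a_k}+x_{b_k}=0 : k=1,\dots,n\}$. I would then define the $m\times n$ matrix $U$ by setting its $a_k$-th row equal to $\ve_k^T$ and its $b_k$-th row equal to $-\ve_k^T$, where $\ve_1,\dots,\ve_n$ is the standard basis of $\R^n$. The columns $U\ve_k$ have disjoint supports and are visibly linearly independent, so $\im(U)$ has dimension $n$; and since $(U\xi)_{a_k}+(U\xi)_{b_k}=\xi_k-\xi_k=0$ for each $k$, we get $\im(U)\subseteq L$, so a dimension count forces $\PP(\im(U))=L$. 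Crucially every row of $U$ is one of $\pm\ve_k$, hence nonzero, so $U$ is a genuine branch matrix with branch set $\{\pm\ve_1,\dots,\pm\ve_n\}$.

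It then remains to verify the two adjectives. Realizability is immediate: the polynomial symbol is $p=e_1$, homogeneous of degree $1$, so $\Sc$ has order $1$ and is therefore realizable on all of $\R^n$. For non-invertibility I would invoke Lemma~\ref{prop:radon_invertibility} directly, since the branch vectors occur in pairs of opposite sign; alternatively, one checks that $\im(U)\subset V(e_{m-1})$ by pairing factors in $\prod_{i=1}^m (T+x_i)=\prod_{k=1}^n (T^2-x_{a_k}^2)$, which is a polynomial in $T^2$ and so has vanishing odd-degree coefficients, whence $e_{m-1}$ (and every odd $e_j$) vanishes identically on $L$ and Theorem~\ref{thm:degenerate_star} applies.

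Given the Fano-scheme dictionary of Theorem~\ref{thm:degenerate_star}, the mathematical content here is modest: the one thing that genuinely must be checked is that each isolated point admits a branch-matrix representative with all rows nonzero. I therefore expect the only (minor) obstacle to be the bookkeeping needed to confirm that the $\pm\ve_k$ matrix parametrizes the full matching subspace $L$ rather than a proper subspace of it — a point the dimension count settles cleanly — together with a careful statement that the $(m-1)!!$ matchings indeed give the distinct isolated points enumerated in Proposition~\ref{cor:fano}.
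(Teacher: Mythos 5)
Your proof is correct, and it rests on the same two pillars as the paper's own proof: Lemma~\ref{prop:radon_invertibility} for the non-invertibility of star transforms whose branch vectors come in opposite-sign pairs, and the dictionary of Theorem~\ref{thm:degenerate_star}, both applied to branch matrices whose rows are the vectors $\pm\ve_1,\dots,\pm\ve_n$. The difference is the direction in which the argument runs. The paper starts from the star transforms: it takes all arrangements of the paired vectors as rows of $U$ and counts the distinct column spaces by an orbit--stabilizer computation --- $(2n)!$ row orderings divided by the $2^n n!$ invertible linear maps sending each $\ve_i$ to some $\pm\ve_j$ --- obtaining $(m-1)!!$ subspaces; the identification of these subspaces with the matching subspaces listed in Proposition~\ref{cor:fano} is left implicit. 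You run the argument in the opposite direction: you fix a matching $(a_1,b_1),\dots,(a_n,b_n)$, write down the explicit representative $U$ (row $a_k$ equal to $\ve_k$, row $b_k$ equal to $-\ve_k$), and check by a dimension count that $\PP(\im(U))$ is exactly the matching subspace $L$. This is more directly aligned with the statement being proved (\emph{every isolated subspace is hit by an honest branch matrix}) and makes explicit the step the paper glosses over; the price is that you quote the enumeration of isolated points from Proposition~\ref{cor:fano} rather than re-deriving the count $(m-1)!!$, which is precisely what the paper's orbit count supplies. Your alternative verification of non-invertibility --- that on $L$ one has $\prod_{i=1}^m(T+x_i)=\prod_{k=1}^n\bigl(T^2-x_{a_k}^2\bigr)$, so every odd elementary symmetric polynomial vanishes on $L$ and Theorem~\ref{thm:degenerate_star} applies directly --- is a genuine bonus: it gives a purely algebraic proof that these star transforms are non-invertible, independent of the Radon-transform input of Lemma~\ref{prop:radon_invertibility}.
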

\begin{proof}
Let $u_1, \dots, u_{n}$ be a set of linearly independent vectors in $\R^n$. We consider the star transform $\Sc$ with the symbol $(e_1, U)$, where $U$ is an $m\times n$ branch matrix with rows $u_1, -u_1, u_2, -u_2,\dotsc, u_{n}, -u_{n}$. By Lemma \ref{prop:radon_invertibility}, the star transform $\Sc$ is not invertible. Therefore, by Theorem \ref{thm:degenerate_star}, this setup can be used to construct projective $(n-1)$-dimensional linear subspaces of $X_{m-1}$.  In particular, we are interested in the column spaces of all
such matrices $U$, up to right multiplication by $\GL_n(\R)$. 
Hence we can assume that our branch matrix has rows $\ve_1, -\ve_1, \dots, \ve_n, -\ve_n$. To form branch matrices $U$ with distinct ranges, one takes all permutations of these $m=2n$ vectors up to the action of $\GL_n(\R)$. 
Therefore, the number of such matrices is equal to the ratio of 
the number of permutations of these $2n$ branches 
and the size of the orbit induced by the action of $\GL_n(\R)$. The problem of calculating the latter is equivalent to counting the number of invertible linear transformations that map each $\ve_i$ to $\ve_j$ or $-\ve_j$ for some $j$. It is easy to check that there are $2^n n!$ invertible transformations of that form. 
This proves that the number of distinct nontrivial linear subspaces of $X_{m-1}$ spanned by the columns of branch matrices of non-invertible star transforms is given by 
$(m-1)!! = (m)!/(2^n n!).$ 
\end{proof}

We remark that, as in the case of the Cayley nodal cubic surface $X_3$, the grassmannian components of the Fano scheme $F_{n-1}(X_{m-1})$ correspond to degenerate non-invertible star transforms with two zero rows.  We remark that if exactly one row of the branch matrix is zero, then the column space cannot be contained in $X_{m-1}$.

We return to our conjectured description of the isolated points of the Fano scheme.  Let $U$ be an $m\times n$ matrix of rank $n$, whose column space is contained in $X_{m-1}$, and assume that all rows of $U$ are non-zero. We can form the  star transform $\Sc = e_1(\Xc{u_1}, \dots, \Xc{u_m})$, and the non-invertibility of $\Sc$ suggests (if we believe the converse of Lemma~\ref{prop:radon_invertibility}) that the branch vectors $u_1, \dots, u_m$ should come in pairs with opposite signs; however, we have already accounted for all such star transforms in Theorem \ref{thm:subspaces}.  If the branch vectors don't come in opposite sign pairs then the geometric structure of the star (existence of a ``broken line'') suggests that such transformation is invertible.

We wonder, for a general star transform on $\R^n$ of order $d$ with $m$ branch vectors, whether smooth real points of the Fano scheme $F_{n-1}(X)$, of projective $(n-1)$-dimensional linear spaces in the hypersurface $X \subset \PP^{m-1}$ defined by the dual symbol, correspond to realizable non-invertible star transforms, and whether singular points of the Fano scheme correspond to degenerate star transforms.

\section{Remarks on the domain and range of star transform}\label{sec:domain}

In this section, we discuss an alternative to restricting attention to realizable star transforms by considering $\Sc$ as a transformation
acting on functions on a bounded convex region $\Omega \subset \R^n$ rather than $R^n$ itself. 

Any formal star
transform $\Sc$ of order~$1$ on $\R^n$ naturally defines an integral
transform $\Sc : C_c^{\infty}(\R^n) \to C^{\infty}(\R^n)$.  However,
for formal star transforms $\Sc$ of higher order, the integrals
defining $\Sc f$ may be divergent on a set of positive measure. Let us
illustrate this via examples. First, it is clear that an expression of
the form $\Xc_u \Xc_{-u} f(x) $ diverges to infinity for a point $x$
in the support of  a non-zero function $f \in
C_c^{\infty}(\R^n)$. A less trivial problematic situation can
be constructed by considering three branch vectors $u_1, u_2, u_3 \in
\R^n$ such that $c_1 u_1 + c_2 u_2 + c_3 u_3 = 0$ for some
positive coefficients $c_j$. The expression $\Xc_{u_2} \Xc_{u_1} f(x)$
is a (weighted) integral of $f$ on this positive cone starting from
vertex $x$. Now, it is not hard to see that since $-u_3$ is in the
positive cone spanned by $u_1$ and $u_2$ the expression $\Xc_{u_3}
\Xc_{u_2} \Xc_{u_1} f(x)$ produces infinity for $x$ in the support of
$f$. To remedy this issue one can introduce various geometric
conditions on the set of branch vectors defining a star
transform. Instead of using such geometric constraints, we employ an
alternative solution by restricting our consideration to a bounded open domain $\Omega \subset \R^n$. 


Let $\Omega$ be a convex bounded open region in $\R^n$. We
provide two alternative realization of an arbitrary formal star
transform $\Sc$ as a transformation acting on functions. (I) $\Sc$
defines a transformation from $C_c^{\infty}(\Omega)$ to
$C^{\infty}(\Omega)$. (II) $\Sc$ defines a linear operator from
$C^{\infty}(\bar{\Omega})$ to itself, where $\bar{\Omega}$ is the
closure of $\Omega$. 


 Define
 \begin{equation}\label{eq:new_X}
     (\Xc_u f) (x) \coloneq \int\limits_{T}^{0} f (x+tu) \,dt, \;\; \; T = \sup \{ t\in \R\;:\; x-tu \in \Omega \}.
 \end{equation}
 Then, for a general symbol $(p, U)$, the star transform $\Sc$ is a well-defined operator $\Sc: C_c^{\infty}(\Omega) \rightarrow C^{\infty}(\Omega)$. On the other hand, it is also clear from formula \ref{eq:new_X} that a star transform defines an operator from $C^{\infty}(\bar{\Omega})$ to itself.\\

While the above definition guarantees convergence of the integrals in the star transform, it also has a drawback. Recall that for any compactly supported function $f\in C_c^\infty(\R^n)$, the fundamental theorem of calculus yields that $\Xc_u \Dc_u f = \Dc_u \Xc_u f = f$. While we still have $\Dc_u \Xc_u f= f$, unfortunately $\Xc_u \Dc_u f = f $ no longer generally holds for $f\in C^{\infty}(\bar{\Omega})$. Despite this limitation, one can still prove interesting statements about the star transform.

\section{Additional remarks}\label{sec:remarks}

The new formulas obtained in this paper for powers of the Laplace operator in $\R^n$ (see formula \eqref{eq:e_r-2D} and Table \ref{tab:Platonic}) may be of independent interest. For example, various discretized interpretations of such representations may lead to derivation of new formulas (stencils) for discrete Laplacians. The authors plan to investigate these questions in future research.

\medskip
    
Generalizations of the star transform of order 1  from scalar functions to vector and tensor fields in $\R^2$ have been studied in \cite{Gaik_Mohammad_Rohit, Gaik_Mohammad_Rohit_2024_numerics, Gaik_Indrani_Rohit_24, ambartsoumian2024v}. In upcoming projects, we will consider a generalization of the results in this paper to the case of the star transform on vector-valued or matrix-valued fields. 

\medskip

The rows of any branch matrix $U$ are non-zero vectors in $\R^n$. This ensures that the operator $\Xc_{u_i}$ is well-defined for each row $u_i$ of $U$. As we will explore, the $m\times n$ matrix $U$, with $m>n$, has multiple geometric interpretations, including the grassmannian interpretation where $U$ is associated with its column space, an $n$-dimensional linear subspace of $\R^m$. The non-zero condition on the rows can be interpreted as the existence of a certain family of polygons associated with the left null space of $U$. For instance, in the case of $m=4$ and $n=3$, if $\alpha \in \operatorname{Null}(U^T) \subset \R^4$, then we have a closed path in $\R^3$ defined by vertices $p_0, \dotsc, p_4$, where $p_0$ is an arbitrary starting point and $p_4 = p_0 + \sum_{j=1}^n \alpha_j u_j$. The closedness of path follows from $\alpha \in \operatorname{Null}(U^T)$, and any such $\alpha$ corresponds to a (free) tetrahedron in three dimensions. We will not pursue this geometric view in this paper but this has been used in \cite{Amb_Lat_star} (see Theorem 6 and Lemma 5).

\medskip

During the spring of 2024, Mohammad Javad Latifi Jebelli visited The University at Buffalo and discussed the idea of the star transform with his host, Alexandru Chirvasitu. Going over results of this paper, Alexandru Chirvasitu found the work interesting and later provided some feedback. As we were preparing this manuscript for publication, Chirvasitu notified us that he had made progress and resolved some of the questions raised in this paper, including parts of the conjectural description of the real points on the Fano scheme of $X_{m-1}$ raised at the end of Section \ref{sec:inv-pol} (see \cite{Chirvasitu}).

\section*{Acknowledgments} The authors would like to thank Nick Addington for computational help.  Ambartsoumian was partially supported by the NIH grant U01-EB029826. Auel received partial support from National Science Foundation grant DMS-2200845. Mohammad Javad Latifi Jebelli was supported as a postdoctoral scholar from the U.S. Office of Naval Research under MURI grant N00014-19-1-242.

\bibliography{references}
\bibliographystyle{plain}

\end{document}